\newtheorem{thm}{Theorem}[section]
\newtheorem{cor}[thm]{Corollary}
\newtheorem{lem}[thm]{Lemma}
\newtheorem{prop}[thm]{Proposition}
\theoremstyle{definition}
\theoremstyle{remark}
\theoremstyle{question}
\newtheorem{qu}[thm]{Question}
\numberwithin{equation}{section}
\begin{document}

\title[Determinants of adjacency matrices of graphs]{Determinants of adjacency matrices of graphs}%
\author{Alireza Abdollahi}%
\address{Department of Mathematics, University of Isfahan, Isfahan 81746-73441, Iran}%
\email{a.abdollahi@math.ui.ac.ir}%
\thanks{}
\subjclass{05C50; 15A15}%
\keywords{Determinant; adjacency matrices of graphs; maximum determinant; }%

\begin{abstract}
We study the set of  all determinants of adjacency matrices of graphs with a given number of vertices.
\end{abstract}
\maketitle
\section{\bf Introduction}

Let $G$ be a simple graph with finite number of vertices. We denote by $\det(G)$ the determinant of an adjacency matrix of $G$. This number $\det(G)$ is an integer and is an invariant of $G$ so that its value is independent of the choice of vertices in an adjacency matrix. 
In this paper, we study the distributions of $\det(G)$ whenever $G$ runs over graphs with finite $n$ vertices for a given integer $n\geq 1$. We denote by $\mathcal{G}_n$ the set of all non-isomorphic graphs with $n$ vertices and 
$$\mathcal{DG}_n=\big\{\det(G) \;|\; G\in \mathcal{G}_n\big\}, \;\; \alpha_n=\max \mathcal{DG}_n \;\;\text{and}\;\; \beta_n=\min \mathcal{DG}_n.$$

In \cite{FvdD}, Fallat  and van den Driessche studied, among others things, the maximum $W(n,k)$ and  minimum $w(n,k)$ of non-zero absolute values of   determinants of $k$-regular graphs with $n$ vertices and they determined $W(n,2)$, $W(n,n-3)$ and $w(n,2)$,  $w(n,n-3)$.\\
 
 It is a famous  result due to Hadamard \cite{Ha} that
if $A=[a_{ij}]$ is an $n\times n$ complex matrix such that  $||a_{ij}||\leq \mu$ for all $i,j$  then
$||\det(A)||\leq \mu^n n^{\frac{n}{2}}$.
In \cite{R},  Ryser found  an upper bound for the absolute value of a $(0,1)$ square matrix of size $n$ with $t$ non-zero entries.
 One may use Ryser's result to give an upper bound for the absolute values of the determinants of graphs with $n$ vertices and $m$ edges (see Theorem \ref{Ry}, below).\\
 
Using Brendan McKay's nauty, we have  computed $\mathcal{DG}_n$ for all $n\leq 9$ (see Proposition \ref{t}, below).\\
 
 Newman \cite[Theorem 2.2]{Ne} proved that $k\cdot\gcd(k,n)$ divides the determinants of $n\times n$ $(0,1)$ matrices whose all row and column sums equal to $k$. A similar result for the determinants of graphs is proved in Theorem \ref{Nee}.\\
 
 Hu \cite{H} has determined the determinant of graphs with exactly one cycle.
     Here we obtain the possible determinants of graphs with exactly  two cycles (see Proposition \ref{CC2}, below). 
\section{\bf Preliminaries}
\begin{prop}\label{KPC}
Let $K_n$ and $P_n$ be the complete graph and the  path with $n$ vertices, respectively, and $C_m$ be the cycle with $m\geq 3$ vertices.
\begin{enumerate}
\item $\det(K_n)=(-1)^{n-1}(n-1)$.
\item $\det(P_n)=\begin{cases} 0 & \mathrm{if} \; n \; \mathrm{is \; odd}\\
1 &\mathrm{if} \; n=2k, \; k \;\; \mathrm{is \; even}\\  -1 & \mathrm{if} \; n=2k, \; k \;\; \mathrm{is\; odd}  \end{cases}.$
\item $\det(C_m)=\begin{cases} 2 & \mathrm{if} \; m \; \mathrm{is \; odd}\\
0 &\mathrm{if} \; m=2k, \; k \;\; \mathrm{is \; even}\\  -4 & \mathrm{if} \; m=2k, \; k \;\; \mathrm{is\; odd}  \end{cases}.$
\end{enumerate}
\end{prop}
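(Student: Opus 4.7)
The plan is to treat the three parts separately, since they involve three different standard graph families. For (1), I would use the fact that the adjacency matrix of $K_n$ equals $J_n-I_n$, where $J_n$ is the all-ones matrix; since $J_n$ has eigenvalues $n$ (simple) and $0$ (with multiplicity $n-1$), the eigenvalues of $J_n-I_n$ are $n-1$ (simple) and $-1$ (with multiplicity $n-1$). Multiplying them gives $\det(K_n)=(n-1)(-1)^{n-1}$ at once.

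For (2), I would set $p_n:=\det(P_n)$ and derive a two-step recurrence by cofactor expansion. The adjacency matrix of $P_n$ is tridiagonal with zero diagonal and ones on the sub- and super-diagonals. Expanding along the first row (whose unique nonzero entry sits at position $(1,2)$), and then expanding the resulting $(n-1)\times(n-1)$ minor along its first column (whose unique nonzero entry sits at the top), produces the recurrence $p_n=-p_{n-2}$ for $n\geq 3$. Combined with the base cases $p_1=0$ and $p_2=-1$, a direct induction delivers the three cases in the statement: $p_n=0$ for odd $n$, and $p_{2k}=(-1)^k$.

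For (3), the plan is to reduce the cycle computation to the path computation of (2). Let $A$ denote the adjacency matrix of $C_m$; its first row has nonzero entries only at columns $2$ and $m$, so cofactor expansion along this row leaves two minors of order $m-1$. A further expansion of each of these, chosen so that a clean path-type block of order $m-2$ is exposed, should lead to an identity of the form
\[
\det(C_m) \;=\; -2\det(P_{m-2})+2(-1)^{m+1}.
\]
Substituting the values of $\det(P_{m-2})$ from (2) then settles the three cases: for $m$ odd this gives $\det(C_m)=2$; for $m=2k$ it gives $\det(C_m)=2(-1)^k-2$, which is $0$ when $k$ is even and $-4$ when $k$ is odd. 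The main obstacle will be the sign bookkeeping in the double cofactor expansion and the correct identification of the intermediate submatrices. A possible cleaner alternative is to invoke the well-known spectrum $\{2\cos(2\pi j/m):0\leq j\leq m-1\}$ of $C_m$ and evaluate the product via standard trigonometric identities, in which case the factor $\cos(\pi/2)=0$ automatically accounts for the vanishing case $4\mid m$.
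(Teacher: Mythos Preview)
Your plan is correct and complete. Parts (1) and (2) match the paper's treatment exactly: the paper simply records (1) as well-known and states the recurrence $\det(P_n)=-\det(P_{n-2})$ for (2), which is precisely what your cofactor expansion yields.

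For (3) your approach differs from the paper's. The paper does not compute at all but invokes \cite[Theorem~1.3, p.~32]{CDS}, the Sachs coefficient theorem, which expresses $\det(G)$ as a signed sum over spanning elementary subgraphs (disjoint unions of edges and cycles); for $C_m$ one then just enumerates these subgraphs. Your route via a double cofactor expansion is more elementary and self-contained: it avoids importing the combinatorial machinery and recycles part (2) directly. The identity you aim for,
\[
\det(C_m)=-2\det(P_{m-2})+2(-1)^{m+1},
\]
is indeed correct. In the first expansion along row~$1$ the two minors $M_{12}$ and $M_{1m}$ each split, after a second expansion, into $\det(P_{m-2})$ plus the determinant of a triangular matrix with unit diagonal; tracking the signs gives $M_{12}=\det(P_{m-2})+(-1)^m$ and $M_{1m}=(-1)^m\det(P_{m-2})+1$, and combining them produces exactly your formula. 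So the ``sign bookkeeping'' you flag as the main obstacle is routine. Your alternative via the spectrum $\{2\cos(2\pi j/m)\}$ also works and is arguably the quickest of the three approaches.
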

\begin{proof}
(1) \; It is well-known.\\
(2) \; It follows from the fact that $\det(P_n)=-\det(P_{n-2})$.\\
(3) \;  It follows from \cite[Theorem 1.3, p. 32]{CDS}.
\end{proof}
\begin{prop}\label{t} The following table gives the set of all determinants of graphs with at most $9$  vertices. By $i^j$ in  the $n$-th row of the
table, we mean that there are exactly $j$ non-isomorphic graphs with $n$ vertices whose determinants are all equal to $i$, and a single number $i$
 (with no exponent) shows that there is a unique graph with $n$ vertices whose determinant is equal to $i$.
\begin{center}
\begin{tabular}{|c|c|}
  \hline
  Number of Vertices & Determinants of graphs with multiplicities \\
  \hline
    $1$ & $0$ \\
  \hline
  $2$ & $0$,$-1$ \\
  \hline
  $3$ & $0^3$,$2$ \\
  \hline
  $4$ & $-3$,$0^7$,$1^3$ \\
  \hline
  $5$ & $-4$,$0^{25}$,$2^6$,$4$ \\
  \hline
  $6$ & $-5^3$,$-4^5$,$-1^{32}$,$0^{99}$,$3^{10}$,$4^2$,$7^2$ \\
  \hline
  $7$ & $-12^2$,$-10^2$,$-6^{13}$,$-4^{21}$,$-2^{20}$,$0^{690}$,$2^{204}$,$4^{40}$,$6^{17}$,$8^{25}$,$10^5$,$12^5$ \\
  \hline
    $8$ & $-28^2$,$-27^2$,$-24^5$,$-23^5$,$-20^7$,$-19^{21}$,$-16^{51}$,$-15^{43}$,$-12^{90}$,$-11^{79}$,$-8^{128}$, \\
    & $-7^{251}$,
      $-4^{581}$,$-3^{813}$,$0^{6551}$,$1^{2416}$,$4^{758}$,$5^{240}$,$8^{73}$,$9^{139}$,$12^{24}$,$13^{23}$,$16^{32}$,$17^8$,$20^1$,$21^3$\\
  \hline
  $9$ &   $-128^2$, $-96^3$,$-72^{12}$,$-64^7$,$-60^5$,$-56^{17}$,$-54^{12}$,$-50^{27}$,$-48^{13}$, $-46^{20}$,$-44^{39}$,$-42^{47}$,\\ & $-40^{103}$,$-38^{52}$,$-36^{110}$,$-34^{128}$,${-32}^{593}$,$-30^{199}$,$-28^{295}$,$-26^{392}$,$-24^{765}$,$-22^{579}$,\\ &
  $-20^{869}$,$-18^{2747}$,$-16^{2247}$,$-14^{1805}$,$-12^{3062}$,$-10^{4290}$,$-8^{17582}$,$-6^{8531}$,$-4^{14901}$, \\ &
  $-2^{57065}$,$0^{133174}$,$2^{6767}$,$4^{6950}$,$6^{4669}$,$8^{1566}$,$10^{1349}$,$12^{1156}$,$14^{695}$,$16^{606}$, \\ &
  $18^{106}$,$20^{297}$, $22^{173}$,$24^{240}$,$26^{95}$,$28^{91}$,$30^{61}$,$32^{46}$,$34^5$,$36^{32}$,$38^{28}$,$40^{3}$,
  $42^{17}$,$44^{16}$,$54^3$,$60^3$,$64$\\
    \hline
\end{tabular}
\end{center}
\end{prop}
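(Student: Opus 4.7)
The statement is an empirical tabulation, and the plan is exhaustive enumeration followed by exact computation. First I will use Brendan McKay's \texttt{geng} program from the \texttt{nauty} package to produce one representative from each isomorphism class of simple graphs on $n$ vertices, for $n=1,\ldots,9$. Its output in \texttt{graph6} format decodes uniquely into an $n\times n$ symmetric $(0,1)$-matrix with zero diagonal, which is precisely the adjacency matrix $A(G)$ to be fed into the determinant routine.

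For each such graph $G$, I will compute $\det\bigl(A(G)\bigr)$ in exact integer arithmetic, for instance via the Bareiss fraction-free algorithm or Gaussian elimination with rational entries. Since $n\leq 9$ and the entries lie in $\{0,1\}$, Hadamard's inequality gives $|\det(G)|\leq 9^{9/2}=19683$, comfortably inside standard machine integer types, so no overflow can occur. The computed values are then tallied by frequency into the table.

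Several sanity checks should accompany the computation. The total count in row $n$ must equal the well-known number of non-isomorphic graphs on $n$ vertices, namely $1,2,4,11,34,156,1044,12346,274668$ for $n=1,\ldots,9$. For small $n$ the individual entries can be cross-verified by hand using Proposition~\ref{KPC} and a direct case analysis on the edge set, and the extreme values appearing in each row must respect the Hadamard bound above. A second independent implementation (say, Sage versus a hand-rolled C program) on $n\leq 8$ gives an additional check before the main $n=9$ run.

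The main obstacle is not mathematical but computational, and arises only at $n=9$, where the enumeration involves $274668$ graphs and therefore $274668$ exact determinant computations. This is well within the reach of a modern desktop, provided one is careful to use genuinely exact integer arithmetic (avoiding floating point entirely, since a single rounded value could place a graph in the wrong bucket) and to maintain the multiplicity bookkeeping in a robust associative-array data structure. Beyond these engineering points the verification is transparent.
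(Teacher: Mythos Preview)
Your approach is essentially identical to the paper's: the authors also generate all graphs on $n\le 9$ vertices with McKay's \textsf{nauty} and then compute the determinants (they mention \textsf{GAP} rather than a hand-rolled Bareiss routine). Your write-up is more detailed about exact arithmetic and sanity checks, but the underlying method is the same exhaustive enumeration.
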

\begin{proof}
Using {\sf nauty} of Brendan McKay, one can generate all graphs with at most $9$ vertices and then by  {\sf GAP} for example, it is easy to find the determinants.
\end{proof}
 Here are some questions which are motivated by the data given in Proposition \ref{t}.
\begin{prop}
\begin{enumerate}
\item   $\beta_n\not=0$ if and only if $\beta_n<0$ if and only if  $n\not\in\{1,3\}$.
\item $\alpha_n\not=0$ if and only if $\alpha_n>0$ if and only if $n\not\in\{1,2\}$.
\item If  $G$ is a graph with odd number of vertices, then $\det(G)$ is  even.
\item The determinant of every bipartite graph with odd vertices is zero.
\end{enumerate}
\end{prop}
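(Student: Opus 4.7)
The plan is to handle the four parts separately: parts (1) and (2) follow from small-case verification together with explicit constructions, while parts (3) and (4) follow from structural properties of symmetric and bipartite adjacency matrices.

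For (1), the cases $n\in\{1,3\}$ are verified directly by listing graphs and using Proposition \ref{KPC}. For $n=2$, $\det(K_2)=-1$. For even $n\geq 4$, $K_n$ has determinant $-(n-1)<0$. For odd $n\geq 5$, $n-2\geq 3$ is odd, so by Proposition \ref{KPC} and multiplicativity of the determinant on disjoint unions, $\det(P_2\sqcup C_{n-2})=(-1)\cdot 2=-2<0$. For (2), the cases $n\in\{1,2\}$ are immediate. For odd $n\geq 3$, $\det(K_n)=n-1>0$; for $n=4$, $\det(P_4)=1$; and for even $n\geq 6$, writing $n=3+(n-3)$ with $n-3\geq 3$ odd, we get $\det(K_3\sqcup K_{n-3})=2(n-4)>0$.

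For (3), I would reduce modulo $2$. Over $\mathbb{F}_2$ the adjacency matrix $A$ is symmetric with zero diagonal. Expanding
\[
\det A=\sum_{\sigma\in S_n}\operatorname{sgn}(\sigma)\prod_{i=1}^n a_{i,\sigma(i)}
\]
and pairing each $\sigma$ with $\sigma^{-1}$, symmetry of $A$ together with $\operatorname{sgn}(\sigma)=\operatorname{sgn}(\sigma^{-1})$ makes the two contributions coincide, so pairs with $\sigma\neq\sigma^{-1}$ contribute even sums. The remaining terms correspond to involutions $\sigma=\sigma^{-1}$; when $n$ is odd every involution has a fixed point $i$, and $a_{ii}=0$ annihilates that product. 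Hence $\det A$ is even.

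For (4), I would invoke the classical fact that the spectrum of a bipartite graph is symmetric about $0$: conjugating $A$ by the diagonal matrix that is $+1$ on one part and $-1$ on the other sends $A$ to $-A$, so $\lambda$ and $-\lambda$ occur with equal multiplicity. When $n$ is odd the eigenvalues cannot all pair up as $\pm\lambda$, forcing $0\in\operatorname{Spec}(A)$, and hence $\det A=\prod\lambda_i=0$. The main obstacle is simply the case-splitting in (1) and (2) to cover both parities of $n$; (3) and (4) are immediate once one invokes the sign-pairing argument and the eigenvalue-symmetry principle respectively.
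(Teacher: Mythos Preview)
Your proof is correct. Parts (1) and (2) are essentially the same as the paper's: small cases by inspection, then explicit graphs with negative/positive determinant for larger $n$ via Proposition~\ref{KPC} and multiplicativity on disjoint unions. Your constructions for even $n$ in (2) ($P_4$ for $n=4$ and $K_3\sqcup K_{n-3}$ for $n\geq 6$) differ cosmetically from the paper's single construction $K_{n-2}\dotplus P_2$, but the idea is identical.

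Parts (3) and (4), however, take a genuinely different route. The paper argues both via Sachs' coefficient theorem \cite[Theorem 1.3, p.~32]{CDS}: for (3), every spanning elementary figure on an odd number of vertices must contain a cycle, so every summand carries a factor of $2$; for (4), a bipartite graph has only even cycles, so no spanning elementary figure on an odd vertex set can exist at all, forcing $\det(G)=0$. Your arguments are instead self-contained: for (3) you pair $\sigma$ with $\sigma^{-1}$ in the Leibniz expansion and kill the surviving involution terms via the zero diagonal, and for (4) you use the spectral symmetry $A\sim -A$ of bipartite graphs to force a zero eigenvalue when $n$ is odd. Both of your arguments avoid the external reference and are arguably more elementary; the paper's approach, on the other hand, is uniform (one tool for both parts) and fits the combinatorial framework used elsewhere in the paper (e.g.\ Lemmas~\ref{CC} and \ref{C-C}).
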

\begin{proof}
(1) \; It is clear that if $n=1$ or $3$, then $\beta_n=0$ and if $n=2$, then $\beta_n=-1$. Thus assume that $n\geq 4$. It is enough to show that $\beta_n<0$. If $n$ is odd, we have $\beta_n\leq \det(C_{n-2}\dotplus P_2)=-2$, by Lemma \ref{KPC}. If $n$ is even, we have $\beta_n\leq \det(K_n)=-(n-1)$, by Lemma \ref{KPC}. This proves (2).\\
(2) \; If $n=1$ or $2$, then clearly $\alpha_n=0$. Thus it is enough to prove that $\alpha_n>0$ whenever $n\geq 3$. If $n$ is odd, then $\alpha_n\geq \det(K_n)=n-1$, by Lemma \ref{KPC}. If $n$ is even, then $\alpha_n\geq \det(K_{n-2}\dotplus P_2)=n-3$. This completes the proof of (3).\\
(3) \; Let $n=|V(G)|$ be odd and let $\mathcal{F}$ be an elementary figure  (if exists) of $G$ with $n$ vertices. Then, by definition, $\mathcal{F}$ is a disjoint union of $s$ number of edges and $t$ number of cycles. If $t=0$, then  $n=2s$ which is not possible.  It follows that any elementary figure of $G$ with $n$ vertices have at least one cycle. Now it follows from \cite[Theorem 1.3, p. 32]{CDS} that $\det(G)$ is an even number.\\
(4) \; Let $G$ be a bipartite graph with odd $n$ vertices. Since every bipartite graph has no odd cycles, it follows that $G$ has no elementary figure with $n$ vertices. Again \cite[Theorem 1.3, p. 32]{CDS} implies that $\det(G)=0$.
\end{proof}
\begin{qu}
For a given positive integer $n$, find integers $m$ such that $\det(G)\not=m$ for all graphs $G$ with $n$ vertices.
\end{qu}
\begin{thm}\label{Ry}
Let $G$ be a graph with $n\geq 2$ vertices and $m$ edges. Then $|\det(G)|\leq \big(\frac{2m}{n}\big)^n\big(1-\frac{2m-n}{n(n-1)}\big)^{n-1}$.
\end{thm}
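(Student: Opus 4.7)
The plan is to reduce the statement to Ryser's bound from \cite{R} by viewing the adjacency matrix as a $(0,1)$-matrix. First I would note that the adjacency matrix $A$ of $G$ is symmetric with zero diagonal, hence the total number of $1$-entries of $A$ equals $t=2m$: each of the $m$ edges $\{u,v\}$ of $G$ contributes the two off-diagonal entries $A_{uv}=A_{vu}=1$ and nothing else. Ryser's theorem asserts that for every $n\times n$ $(0,1)$-matrix $B$ with $t$ ones, $|\det B|\leq (t/n)^{n}\bigl(1-(t-n)/(n(n-1))\bigr)^{n-1}$; applying this with $B=A$ and $t=2m$ yields the inequality.

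Since $A$ is symmetric, one can also give a short self-contained derivation. Form $M=A^{2}$, which is positive semidefinite with $\det(M)=(\det A)^{2}$, $\mathrm{tr}(M)=\sum_{i,j}a_{ij}^{2}=2m$, and non-negative entries. The Rayleigh quotient with the all-ones vector gives $\mu_{1}(M)\geq \mathbf{1}^{T}M\mathbf{1}/n=\sum_{i}d_{i}^{2}/n$, which by Cauchy--Schwarz is at least $(2m)^{2}/n^{2}$. Applying AM--GM to the remaining eigenvalues then gives $\det M\leq \mu_{1}\bigl((2m-\mu_{1})/(n-1)\bigr)^{n-1}$, and since the function $f(x)=x\bigl((2m-x)/(n-1)\bigr)^{n-1}$ is decreasing on $[2m/n,\infty)$ (one checks $f'(x)=(2m-x)^{n-2}(2m-nx)/(n-1)^{n-1}$), one concludes, assuming $2m\geq n$ (otherwise some vertex has degree zero, forcing $\det A=0$), that $(\det A)^{2}\leq f(4m^{2}/n^{2})$.

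I expect the main obstacle to be purely algebraic: verifying that $f(4m^{2}/n^{2})$ is bounded above by the square of the desired right-hand side. After clearing denominators this reduces to the elementary inequality $2m(n^{2}-2m)\geq n^{2}(n-1)$, which holds precisely on the range $n\leq 2m\leq n(n-1)$ (the entire range of possible values of $2m$ once $2m\geq n$), with equality at both endpoints. The upper endpoint corresponds to $G=K_{n}$, where Proposition~\ref{KPC}(1) confirms the bound is sharp.
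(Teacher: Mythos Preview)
Your first paragraph is exactly the paper's proof: observe that the adjacency matrix is a $(0,1)$-matrix with $2m$ ones and invoke Ryser's bound. Nothing more is needed.

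Your second and third paragraphs go further and give a self-contained derivation exploiting the symmetry of $A$, which the paper does not attempt. The argument is correct: with $M=A^{2}$ one has $\mathrm{tr}(M)=2m$, the Rayleigh bound $\mu_{1}(M)\geq(2m)^{2}/n^{2}$, and AM--GM on the remaining eigenvalues gives $\det M\leq f(\mu_{1})$ with $f(x)=x\bigl((2m-x)/(n-1)\bigr)^{n-1}$; monotonicity of $f$ on $[2m/n,2m]$ then yields $(\det A)^{2}\leq f(4m^{2}/n^{2})$. The algebraic reduction to $2m(n^{2}-2m)\geq n^{2}(n-1)$ is accurate, and the concavity argument showing this holds on $n\leq 2m\leq n(n-1)$ (with equality at both endpoints) is clean. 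The gain over simply citing \cite{R} is that you avoid the black box and make the sharpness at $K_{n}$ transparent; the cost is only a few extra lines. The case $2m<n$ is handled correctly by the zero-row observation.
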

\begin{proof}
Note that the number of $1$'s in any adjacency matrix of $G$ is equal to $2m$. Now the proof  follows from Theorem 3 of \cite{R}.
\end{proof}
\begin{prop}\label{Nee}
Let $G$ be a graph with $n$ vertices and let $\{d_1,\dots,d_n\}$ be the set of vertex degrees of $G$. If $d=\gcd(d_1,\dots,d_n)$, then $d\cdot\gcd(\frac{2m}{d},d)$ divides $\det(G)$.
\end{prop}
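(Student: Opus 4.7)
The plan is to mimic Newman's argument for regular graphs by performing elementary row and column operations on the adjacency matrix $A$ of $G$. Since $A$ is symmetric, the $j$-th column sum of $A$ equals $d_j$. I would first add rows $2,\ldots,n$ to row $1$; this leaves $\det(A)$ unchanged and makes the first row equal to the degree sequence $(d_1,d_2,\ldots,d_n)$. Factoring the common divisor $d$ out of this row extracts a factor of $d$ from $\det(A)$ and yields an integer matrix $A'$ whose first row is $(d_1/d,\ldots,d_n/d)$ and whose remaining rows coincide with those of $A$. Note $2m/d$ is automatically an integer because $d$ divides $\sum_i d_i=2m$.

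Next, I would apply the analogous column operation to $A'$, adding columns $2,\ldots,n$ to column $1$. The new $(1,1)$-entry becomes $\sum_{j=1}^n d_j/d = 2m/d$, while for $i\geq 2$ the new $(i,1)$-entry becomes $\sum_{j=1}^n A_{ij}=d_i$. Hence the first column of the resulting matrix is $(2m/d,\,d_2,\,d_3,\,\ldots,\,d_n)^T$. Setting $g:=\gcd(2m/d,d_2,\ldots,d_n)$, I can factor $g$ out of this column, leaving an integer matrix whose determinant is an integer. Combining the two extractions gives $d\cdot g \mid \det(A)$.

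It remains to check $\gcd(2m/d,d)\mid g$. Because $d$ divides every $d_i$, the integer $\gcd(2m/d,d)$ divides each $d_i$ with $i\geq 2$; it obviously divides $2m/d$ as well, so it divides $g$. Therefore $d\cdot\gcd(2m/d,d)\mid d\cdot g\mid\det(A)$, which is the claim.

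There is no real conceptual obstacle here: the only point requiring care is to verify at each step that the co-factor matrix remains integer-valued so the remaining determinant stays an integer. This is immediate since only the first row and first column are altered, and in both cases the entries produced are explicitly divisible by the scalar being pulled out; all other entries are the original $\{0,1\}$-entries of $A$.
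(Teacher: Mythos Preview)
Your argument is correct and follows essentially the same route as the paper: sum all rows (resp.\ columns) to create a line containing the degree sequence, factor out $d$, then sum in the other direction to create a line $(2m/d,d_2,\dots,d_n)$ and factor out its gcd. The paper sums columns into column~$1$ and then rows into row~$n$, whereas you sum rows into row~$1$ and then columns into column~$1$; by the symmetry of $A$ these are equivalent manoeuvres. The only minor difference is that the paper asserts the equality $\gcd(2m/d,d_2,\dots,d_n)=\gcd(2m/d,d)$ (using $d_1=2m-\sum_{i\ge 2}d_i$), while you prove only the divisibility $\gcd(2m/d,d)\mid\gcd(2m/d,d_2,\dots,d_n)$, which is all that is needed for the stated conclusion.
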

\begin{proof}
Let $A=\begin{bmatrix}
      A_1 & \cdots & A_n
    \end{bmatrix}$
 be an adjacency matrix of $G$, where $A_1,\dots, A_n$ are its columns. Then
 $$\det(G)=\begin{vmatrix}A_1+\cdots+A_n & A_2  & \cdots & A_n                                                                                     \end{vmatrix}=\begin{vmatrix} \mathbf{d} & A_2 & \cdots & A_n \end{vmatrix},$$
 where $\mathbf{d}=\begin{bmatrix}
                     d_1 & \cdots & d_n
                   \end{bmatrix}^T$.
  Now sum up all the rows with the $n$th one. Then $\det(G)$ is equal to the determinant of a  matrix whose first column
  is $$\begin{bmatrix} d_1 & \cdots & d_{n-1} &\sum_{i=1}^nd_i\end{bmatrix}^T$$ and its $n$th row is
  $$\begin{bmatrix} \sum_{i=1}^nd_i & d_2&\cdots & d_{n}\end{bmatrix}.$$
  Since $\sum_{i=1}^n d_i=2m$, by factoring $d$ from the first column and $\gcd(\frac{2m}{d},d_2,\dots,d_n)$ from the last column, we have that
  $\det(G)=d\cdot\gcd(\frac{2m}{d},d_2,\dots,d_n)$. Since $d_1=2m-\sum_{i=2}^nd_i$, we have  $\gcd(\frac{2m}{d},d_2,\dots,d_n)=\gcd(\frac{2m}{d},d)$.
  This completes the proof.
 \end{proof}
 \begin{thm} {\rm (Hu \cite{H})}  \label{H}
 Let $G$ be a connected  graph with $n$ vertices  having a unique cycle $C$ with $k<n$ vertices. Then
$$\det(G)=\begin{cases} 1 & G \; {\rm has \; a \; perfect \; matchig},\; k\equiv 1 \mod 2, \; n\equiv 0 \mod 4\\
-1 & G \; {\rm has \; a \; perfect \; matchig},\; k\equiv 1 \mod 2, \; n\equiv 2 \mod 4\\
4 & G \; {\rm has \; a \; perfect \; matchig},\; k\equiv 2 \mod 4, \; n\equiv 0 \mod 4\\
-4 & G \; {\rm has \; a \; perfect \; matchig},\; k\equiv 2 \mod 4, \; n\equiv 2 \mod 4\\
0 & G \; {\rm has \; a \; perfect \; matchig}, \; k\equiv 0 \mod 4\\
2 & G \; {\rm has \; no \; perfect \; matchig},\; G\backslash C \; {\rm has \; a \; perfect \; matching}, n-k\equiv 0 \mod 4\\
-2 & G \; {\rm has \; no \; perfect \; matchig},\; G\backslash C \; {\rm has \; a \; perfect \; matching}, n-k\equiv 2 \mod 4\\
0 & {\rm both} \; G \; {\rm and} \; G\backslash C \; {\rm have \; no \; perfect \; matching}
\end{cases}.$$
\end{thm}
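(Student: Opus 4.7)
The plan is to apply the Sachs coefficient formula from \cite[Theorem~1.3, p.~32]{CDS}, which expresses
\[
\det(G)=\sum_{S}(-1)^{n-p(S)}2^{c(S)},
\]
where $S$ runs over all spanning elementary subgraphs of $G$ (those whose connected components are single edges or cycles), $p(S)$ denotes the number of components, and $c(S)$ denotes the number of cycle components. Since $C$ is the unique cycle of $G$, any such spanning $S$ falls into exactly one of two types: either (i) $S$ has no cycle and is a perfect matching of $G$, contributing $(-1)^{n/2}$ and requiring $n$ even; or (ii) $S$ is the cycle $C$ together with a perfect matching of $G\setminus V(C)$, contributing $2\cdot(-1)^{(n+k)/2-1}$ and requiring $n-k$ even. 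These parity obstructions already kill one or both types in several of the eight cases.

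Next I would establish the multiplicities. The graph $G\setminus V(C)$ is a forest, so it admits at most one perfect matching, and accordingly type (ii) contributes either $0$ or $\pm 2$. For the count $m(G)$ of perfect matchings of $G$ the parity of $k$ matters. When $k$ is odd, the symmetric difference of two perfect matchings of $G$ is a vertex-disjoint union of even cycles of $G$; but $C$ is the unique cycle of $G$ and is odd, so the two matchings must coincide. When $k$ is even and $G\setminus V(C)$ has a perfect matching, each pendant tree $T_i$ rooted at a cycle vertex $v_i$ has $T_i-v_i$ perfectly matched; by parity this prevents $T_i$ itself from admitting a perfect matching and so forces $v_i$ to be matched along $C$ in every perfect matching of $G$. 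Hence perfect matchings of $G$ are in bijection with perfect matchings of the cycle $C$, and since $k$ is even there are exactly two, giving $m(G)=2$ together with $m(G\setminus V(C))=1$.

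With these counts in hand, the eight tabulated values reduce to sign bookkeeping. In cases $1$ and $2$, $k$ is odd and $G$ has a perfect matching (so $n$ is even and $n-k$ is odd), type (ii) is empty, and $\det(G)=(-1)^{n/2}\cdot m(G)=\pm 1$ according to $n\bmod 4$. In cases $6$ and $7$, $G$ has no perfect matching and only type (ii) survives, contributing $2\cdot(-1)^{(n+k)/2-1}=\pm 2$ according to $(n-k)\bmod 4$. Case $8$ has both types empty, so $\det(G)=0$. The remaining regime, cases $3$--$5$ with $k$ even and both perfect matchings present, combines $(-1)^{n/2}\cdot 2$ with $(-1)^{(n+k)/2-1}\cdot 2$; since the two exponents differ by $k/2-1$, their signs agree when $k\equiv 2\pmod 4$ and disagree when $k\equiv 0\pmod 4$, giving $\pm 4$ or $0$ respectively. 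The main technical hurdle is the structural claim of the middle paragraph linking perfect matchings of $G\setminus V(C)$ to the identity $m(G)=2$ when $k$ is even, since this rigid count is exactly what makes the two Sachs contributions reinforce or cancel cleanly.
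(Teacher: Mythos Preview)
The paper does not supply its own proof of this result; it is quoted from Hu \cite{H} without argument. So there is no in-paper proof to compare against, and your Sachs-formula approach via \cite[Theorem~1.3, p.~32]{CDS} is the natural route.

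Your case analysis, however, has a genuine gap. In the branch where $k$ is even and $G$ has a perfect matching (the tabulated cases $3$--$5$), you quietly assume that $G\setminus V(C)$ also has a perfect matching, and from this deduce $m(G)=2$ so that the type~(i) and type~(ii) contributions combine to $\pm4$ or $0$. But nothing in the stated hypotheses of those cases forces $G\setminus V(C)$ to be perfectly matched. Take the $6$-cycle on vertices $1,\dots,6$ with pendant leaves $7$ attached at $1$ and $8$ attached at $4$: here $n=8$, $k=6\equiv 2\pmod 4$, and $G$ has the unique perfect matching $\{17,23,48,56\}$, while $G\setminus V(C)$ consists of two isolated vertices and has none. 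Your own formula then yields $\det(G)=(-1)^{n/2}\cdot m(G)=1$, not $4$, and a direct computation of the $8\times 8$ determinant confirms $\det(G)=1$. So the gap cannot be closed: the implication ``$k$ even and $G$ has a perfect matching $\Rightarrow$ $G\setminus V(C)$ has a perfect matching'' is false, and the theorem as transcribed in the paper is actually incorrect in this subcase. Your argument is sound wherever the statement is, and breaks exactly where the statement itself does.
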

\begin{cor}
If  $\alpha_n=\det(G)$ or $\beta_n=\det(H)$  for some graphs $G$ and $H$, then neither $G$ nor $H$ have a unique cycle for all $n>5$.
\end{cor}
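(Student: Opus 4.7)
The plan is to prove the contrapositive: I aim to show that any graph $G$ with a unique cycle satisfies $|\det(G)|\le 4$, and that $\alpha_n>4$ and $\beta_n<-4$ for every $n>5$. Together these force any optimizer $G$ or $H$ achieving $\alpha_n$ or $\beta_n$ to contain more than one cycle.

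For the first claim, suppose $G$ has a unique cycle $C$. Then exactly one connected component, say $G_0$, contains $C$, and the remaining components form a forest $F$; thus $\det(G)=\det(G_0)\det(F)$. Theorem \ref{H} handles $G_0$ when the cycle is a proper subgraph of it, giving $\det(G_0)\in\{-4,-2,-1,0,1,2,4\}$, and the remaining case $G_0=C_k$ is covered by Proposition \ref{KPC}(3) with the same conclusion. For $F$, I would invoke the elementary-figure expansion \cite[Theorem 1.3, p.~32]{CDS}: an elementary figure of a forest cannot contain a cycle, so it must be a perfect matching of $F$, and a tree admits at most one perfect matching (the symmetric difference of two distinct matchings would have to contain a cycle). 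Hence $\det(F)\in\{-1,0,1\}$, and so $|\det(G)|\le 4$.

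For the second claim, the values $\alpha_n,\beta_n$ with $n\in\{6,7,8,9\}$ are read directly from Proposition \ref{t}, all satisfying $\alpha_n\ge 5$ and $\beta_n\le -5$. For $n\ge 10$, I would instead exhibit explicit graphs using Proposition \ref{KPC} together with multiplicativity of $\det$ under disjoint union. When $n$ is even, $\det(K_n)=-(n-1)\le -9$ and $\det(K_{n-2}\dotplus P_2)=n-3\ge 7$; when $n$ is odd, $\det(K_n)=n-1\ge 10$ and $\det(K_{n-3}\dotplus C_3)=-2(n-4)\le -14$ (using that $n-3$ is even). Either way $\alpha_n>4$ and $\beta_n<-4$, completing the argument.

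I expect the only genuine obstacle to be the forest estimate $|\det(F)|\le 1$: it rests on the elementary-figure description of $\det$ from \cite{CDS} together with the uniqueness of a perfect matching in a tree. Once this is in hand, the rest is a straightforward assembly of Theorem \ref{H}, Proposition \ref{KPC}, and the small-$n$ data of Proposition \ref{t}.
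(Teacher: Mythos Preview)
Your proposal is correct and follows essentially the same approach as the paper: split a unique-cycle graph into the component carrying the cycle plus a forest, bound the former by Theorem~\ref{H} (and Proposition~\ref{KPC}(3) for the case $G_0=C_k$) and the latter by $\{-1,0,1\}$, then show $\alpha_n>4$ and $\beta_n<-4$ for $n>5$. The paper leaves both the forest bound and the $\alpha_n,\beta_n$ inequalities as ``not hard to see''; you supply the missing justifications (elementary figures and unique perfect matchings for forests; Proposition~\ref{t} for small $n$ and explicit $K_n$, $K_{n-2}\dotplus P_2$, $K_{n-3}\dotplus C_3$ constructions for large $n$), which is a genuine improvement in completeness but not a different strategy.
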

\begin{proof}
It is not hard to see that for $n>5$, $\alpha_n>4$ and $\beta_n<-4$. Since a graph with a unique cycle has exactly one
connected component with a unique cycle and the other components (if exist) are tree (whose determinants belong to $\{0,1,-1\}$),   Theorem \ref{H} completes the proof.
\end{proof}
\begin{lem}\label{CC}
Let $G$ be a graph containing exactly two cycles $C_1$ and $C_2$ of orders $k$ and $\ell$, respectively such that $|V(C_1) \cap V(C_2)|=1$.
If $|V(G)|=k+\ell-1$, then $$\det(G)=\begin{cases} 0 & {\rm if \; both} k,\ell \; {\rm are \; even}\\
2\cdot (-1)^{\ell+k-1}\cdot \big( (-1)^{\frac{\ell-1}{2}+1}+ (-1)^{\frac{k-1}{2}+1}\big) & {\rm if \; both}\; k,\ell \; \rm{ are \; odd}\\
 2\cdot (-1)^{\ell+k-1}\cdot \big( (-1)^{\frac{\ell}{2}+\frac{k-1}{2}} +(-1)^{\frac{k-1}{2}+1}\big) & {\rm if}\; k \; {\rm is\; odd}, \;\ell \; {\rm is \; even}\\
 2\cdot (-1)^{\ell+k-1}\cdot \big( (-1)^{\frac{k}{2}+\frac{\ell-1}{2}}+ (-1)^{\frac{\ell-1}{2}+1}\big) & {\rm if}\; \ell \; {\rm is\; odd}, \;k \; {\rm is \; even}\\
 \end{cases}$$
\end{lem}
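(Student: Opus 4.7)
The plan is to apply the Sachs-type determinant expansion \cite[Theorem~1.3, p.~32]{CDS}: writing $n=k+\ell-1$,
$$\det(G)=(-1)^n\sum_{\mathcal{F}}(-1)^{p(\mathcal{F})}\,2^{c(\mathcal{F})},$$
where $\mathcal{F}$ ranges over the spanning elementary subgraphs of $G$ (disjoint unions of edges and cycles exhausting $V(G)$), $p(\mathcal{F})$ is the number of components, and $c(\mathcal{F})$ is the number of cycle components. The whole task therefore reduces to listing the surviving $\mathcal{F}$'s and tracking $p$ and $c$.

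The first step is the classification: since $C_1$ and $C_2$ are the only cycles of $G$ and share the vertex $v$, no $\mathcal{F}$ can contain both. Hence each $\mathcal{F}$ is of exactly one of three types: (a) $C_1$ together with a perfect matching of the path $C_2-v$ on $\ell-1$ vertices; (b) symmetrically, $C_2$ together with a perfect matching of $C_1-v$; or (c) a perfect matching of $G$ itself. Since a path on $m$ vertices has a unique perfect matching iff $m$ is even, (a) exists iff $\ell$ is odd and (b) iff $k$ is odd. For (c) I would examine the four edges of $G$ meeting $v$: fixing the one used in the matching leaves two disjoint paths to be matched, of lengths $k-2$ and $\ell-1$, or $k-1$ and $\ell-2$, depending on which side of $v$ the edge lies. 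The joint parity constraint then forces one of $k,\ell$ to be odd and the other even, and in that case exactly two of the four choices extend, producing exactly two perfect matchings.

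The second step is substitution. Type (a) contributes with $p=1+\frac{\ell-1}{2}$, $c=1$; type (b) with $p=1+\frac{k-1}{2}$, $c=1$; each perfect matching with $p=n/2$, $c=0$, where in the mixed-parity case $n/2$ rewrites as $\frac{k-1}{2}+\frac{\ell}{2}$ or $\frac{k}{2}+\frac{\ell-1}{2}$ respectively. Plugging these into Sachs' formula and splitting on the parities of $k$ and $\ell$ recovers the four lines of the lemma: both even gives no $\mathcal{F}$ at all, so $\det(G)=0$; both odd keeps only (a) and (b); and each mixed case keeps one of (a), (b) plus the two perfect matchings. The common prefactor $(-1)^{k+\ell-1}$ is simply $(-1)^n$, and a cosmetic identity such as $(-1)^{(\ell+1)/2}=(-1)^{(\ell-1)/2+1}$ puts the output in exactly the form displayed in the lemma.

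I expect the one genuinely non-mechanical step to be the enumeration in (c): verifying that only the four edges at $v$ need be tried, that the mixed-parity hypothesis is both necessary and sufficient for each of them to extend to a full matching, and that the resulting two matchings are honestly distinct. Once that small piece of parity bookkeeping is pinned down, everything else is symbol pushing against Sachs' formula.
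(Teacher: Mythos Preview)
Your proposal is correct and follows essentially the same route as the paper: both apply the Sachs formula from \cite[Theorem~1.3, p.~32]{CDS} and reduce the computation to enumerating the spanning elementary subgraphs, which in each parity case are exactly the figures you list as types (a), (b), (c). Your explicit degree-$4$ analysis at the shared vertex $v$ to count the two perfect matchings in the mixed-parity case is in fact more detailed than the paper's, which simply writes the three figures down; otherwise the arguments coincide.
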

\begin{proof}
Let $C_1:=x_1\cdots x_k x_1$ and $C_2:=y_1\cdots y_\ell y_1$. Suppose $C_1\cap C_2=\{x_k\}=\{y_\ell\}$. We want to apply \cite[Theorem 1.3, p. 32]{CDS} and so we need to find all elementary figures of $G$, that are all spaning   subgraphs of $G$ with exactly $k+\ell-1$ vertices whose connected components are either an edge or a cycle of $G$. If $k$ and $\ell$ are both even, then $G$ has no elementary figure. If $k$ and $\ell$ are both odd, then we have only two elementary figures
$$ F_1= \{C_1,y_1y_2,\dots,y_{\ell-2}y_{\ell}\} \;\;{\rm and} \;\; F_2=\{C_2,x_1x_2,\dots,x_{k-2}x_{k-1}\}.$$
If $k$ is odd and $\ell$ is even, we have exactly $3$ elementary figures as follows:
\begin{align*}\{y_1y_2,\dots,y_{\ell-1}y_{\ell},x_1x_2,\dots,x_{k-1}x_{k-2}\},\{y_1y_{\ell},y_2y_3,\dots,y_{\ell-2}y_{\ell-1},
 x_{k-1}x_{k-2},\dots,x_2x_1\},\\ \{C_2,x_1x_2,\dots,x_{k-2}x_{k-1}\}.
   \end{align*}
   Similarly, for the case $k$  even and $\ell$ odd, we have exactly $3$ elementary figures (interchange $k$ and $\ell$ in the latter elementary figures). Now we can apply   \cite[Theorem 1.3, p. 32]{CDS} and this completes the proof.
\end{proof}
\begin{lem}\label{C-C}
Let $G$ be a connected graph containing exactly two cycles $C_1$ and $C_2$ of orders $k$ and $\ell$, respectively such that $V(C_1)\cap V(C_2)=\varnothing$. If $V(G)=V(C_1)\cup V(C_2)$, then $\det(G)\in\{-8,0,3,5,16\}$. If $C_1$ and $C_2$ are connected by a path $P$ of length  at least $2$ and $V(G)=V(C_1)\cup V(C_2) \cup V(P)$, then
$$\det(G)\in \begin{cases}\{0,\pm 4\} & \text{if}\; t=3\\
                           \{-16,-8,-5,-3,0\} & {\rm if}\; t=4\\
                           \{0,\pm3,\pm4,\pm5,\pm8,\pm16\} & {\rm if} \; t>4   \end{cases}.$$
\end{lem}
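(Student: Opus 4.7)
The plan is to apply Sachs's theorem (Theorem 1.3, p.~32 of \cite{CDS}), which expresses
\[
\det(A(G)) \;=\; (-1)^{n} \sum_{\mathcal{F}} (-1)^{p(\mathcal{F})}\, 2^{c(\mathcal{F})},
\]
the sum ranging over all spanning elementary figures $\mathcal{F}$ of $G$, where $p(\mathcal{F})$ is the number of connected components and $c(\mathcal{F})$ is the number of them that are cycles. Because by hypothesis $C_{1}$ and $C_{2}$ are the only cycles of $G$, the only possible cyclic components of any $\mathcal{F}$ are $C_{1}$ or $C_{2}$ themselves; every other vertex must be covered by an edge component.

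For the first assertion, let $uv$ be the unique bridge with $u\in V(C_{1})$ and $v\in V(C_{2})$. I would enumerate the spanning elementary figures into four mutually exclusive families: $\{C_{1},C_{2}\}$; a cycle $C_{i}$ together with one of the two perfect matchings of the other cycle (admissible only when that cycle has even order); a perfect matching of $G$ avoiding the bridge (requiring $k,\ell$ both even, producing four such figures from the two PMs of each cycle); and the unique perfect matching of $G$ that uses the bridge $uv$ together with the forced PMs of the residual paths $C_{1}-u$ and $C_{2}-v$ (requiring $k,\ell$ both odd). Substituting into Sachs's formula and splitting on parities, the contributions factor neatly: the case $k,\ell$ both even reduces to a product of the form $4\bigl(1-(-1)^{k/2}\bigr)\bigl(1-(-1)^{\ell/2}\bigr)\in\{0,16\}$; the mixed-parity case collapses into $\{0,-8\}$; and the both-odd case gives $4-(-1)^{(k-1)/2+(\ell-1)/2}\in\{3,5\}$. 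Taking the union yields the stated set $\{-8,0,3,5,16\}$.

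For the second assertion, write $P$ as $u=p_{0},p_{1},\ldots,p_{t-1}=v$. Each internal vertex $p_{i}$ with $1\le i\le t-2$ has degree $2$ in $G$, so in every spanning $\mathcal{F}$ the restriction $\mathcal{F}\cap P$ must be a matching of $P$ saturating $p_{1},\ldots,p_{t-2}$. Such matchings fall into exactly four types, classified by the parity of $t$ and by which of $u,v$ are also saturated by a $P$-edge: (A) both $u,v$ saturated ($t$ even); (B) only $u$ saturated ($t$ odd); (C) only $v$ saturated ($t$ odd); (D) neither saturated ($t$ even). On each cycle-side the residual vertex set is either $V(C_{i})$ (covered by $C_{i}$ itself or, when $|V(C_{i})|$ is even, by one of its two PMs) or $V(C_{i})$ minus one vertex (covered by the unique PM of the residual path, which exists only when $|V(C_{i})|$ is odd). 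Multiplying the admissible $P$-matching types by the admissible cycle-side coverings and summing the Sachs contributions, one tabulates $\det(G)$ by the parities of $k,\ell,t$: for $t=3$ only types (B),(C) survive; for $t=4$ only (A),(D) survive; for $t>4$ both parities of $t$ arise, so all four types appear across varying graphs, accounting for the larger set.

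The main obstacle is the volume of bookkeeping: one must track the global sign $(-1)^{n}=(-1)^{k+\ell+t}$, the local signs $(-1)^{p(\mathcal{F})}$ (which depend on the parities of $k/2$, $\ell/2$ and $(t-2)/2$), and the admissibility conditions for each type, all simultaneously. A compact organisation is to note that within each $P$-matching type the cycle-side contributions factor as a product of a ``$C_{1}$-term'' and a ``$C_{2}$-term'', each expressible in closed form in the residues of $k$ (respectively $\ell$) modulo $4$; combining these with the $P$-sign then makes the sum collapse into the three stated finite sets as $t$ runs through its three regimes.
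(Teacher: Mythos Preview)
Your approach is correct but differs from the paper's. The paper never enumerates Sachs figures directly; instead it repeatedly applies the edge- and vertex-deletion identities for the characteristic polynomial (Theorems~2.11 and~2.12 of \cite{CDS}). For the bridge case $t=2$ this gives at once $\det(G)=\det(C_k)\det(C_\ell)-\det(P_{k-1})\det(P_{\ell-1})$, and the five values are read off from Proposition~\ref{KPC}. For $t\ge3$ the paper introduces the auxiliary subgraph $H$ induced on $V(C_1)\cup\{z_1,\dots,z_{t-1}\}$, expresses $\det(H)$ recursively in terms of $\det(C_k)$ and path determinants, and then obtains $\det(G)=\det(H)\det(C_\ell)-\det(H\setminus\{z_{t-1}\})\det(P_{\ell-1})$, again reducing everything to the table in Proposition~\ref{KPC}. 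Your direct Sachs enumeration is more combinatorial and self-contained---the factorisation into a ``$C_1$-term'', a ``$C_2$-term'' and a path-matching type is transparent and avoids the auxiliary graph $H$---whereas the paper's recursion is shorter on the page because the parity bookkeeping is already absorbed into the known values of $\det(C_m)$ and $\det(P_m)$. Both routes produce the same finite sets.
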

\begin{proof}
Let $C_1=x_1\cdots x_k x_1$, $C_2=y_1\cdots y_\ell y_1$. By hypothesis, in any case, we have that there exists a path $P=x_k=z_1z_2\cdots z_t=y_{\ell}$  connecting $C_1$ and $C_2$ such that $V(G)=V(C_1)\cup V(C_2) \cup V(P)$. It is clear that
$t=2$ if and only if $V(G)=V(C_1) \cup V(C_2)$. In this case, by \cite[Theorem 2.12, p. 59]{CDS}, we have $\det(G)=\det(C_1)\det(C_2)-\det(P_{k-1})\det(P_{\ell-1})$. Now it follows from Lemma \ref{KPC}, $\det(G)\in\{-8,0,3,5,16\}$. \\
Now assume that $t\geq 3$, that is the length of $P$ is at least $3$,  and let $H$ be the induced subgraph of $G$ on the vertices $V=V(C_1) \cup \big(V(P)\backslash\{z_t\}\big)$.
Note that, by  \cite[Theorem 2.11, p. 59]{CDS}, we have
$$\det(H)=\begin{cases} -\det(P_{k-1}) & {\rm if} \; t=3\\
\det(C_k)\det(P_{t-2})-\det(P_{k-1})\det(P_{t-3}) & {\rm if} \; t\geq 4 \end{cases}.$$
Thus it follows from  \cite[Theorem 2.12, p. 59]{CDS} that
$$\det(G)=\det(H)\det(C_\ell)-\det(H\backslash\{z_{t-1}\})\det(P_{\ell-1}).$$
Note that $\det(H\backslash\{z_{t-1}\})$ can be similarly compute by a formulae as given for $\det(H)$. Now it is easy  to complete the proof.
\end{proof}
\begin{prop}\label{CC2}
Let $G$ be a graph with exactly two cycles. Then $\det(G)\in\{0,\pm 1,\pm 2, \pm 3, \pm 4,\pm 5,\pm 8,\pm 16\}$.
\end{prop}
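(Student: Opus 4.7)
The plan is induction on $|V(G)|$, with Lemmas~\ref{CC} and~\ref{C-C} handling the base case and Theorem~\ref{H} (together with the triviality $\det(T)\in\{0,\pm 1\}$ for a forest $T$) absorbing the sub-cases in which a pendant-reduction destroys a cycle. Before starting the induction I would dispose of disconnected configurations: if the two cycles of $G$ lie in different connected components, then each component has at most one cycle, so $\det(G)=\prod_i\det(G_i)$, where each cyclic factor lies in $\{0,\pm 1,\pm 2,\pm 4\}$ by Theorem~\ref{H} and each tree factor lies in $\{0,\pm 1\}$; the product then lies in $\{0,\pm 1,\pm 2,\pm 4,\pm 8,\pm 16\}$, already inside the target set $\{0,\pm 1,\pm 2,\pm 3,\pm 4,\pm 5,\pm 8,\pm 16\}$. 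If both cycles lie in one component but $G$ has further tree components, the same multiplicativity reduces us to the connected case, so assume henceforth that $G$ is connected.

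The base case of the induction is $G$ of minimum degree at least $2$. Any two distinct cycles of $G$ share at most one vertex (a shared subpath of length $\geq 1$ would yield a third cycle by symmetric difference), and if the two cycles are disjoint the connection between them in $G$ consists of a unique path (two or more internally disjoint paths would manufacture a new cycle). Hence $G$ is either (a) two cycles meeting at a single vertex, handled by Lemma~\ref{CC}, or (b) two vertex-disjoint cycles joined by a path of length $\geq 1$, handled by Lemma~\ref{C-C}; both lemmas place $\det(G)$ inside the target set.

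For the inductive step, pick any pendant vertex $v$ of $G$ with unique neighbor $u$. A cofactor expansion along the row and column of $v$ yields the standard identity
\[\det(G)=-\det(G-v-u).\]
Because $v$ has degree $1$, no cycle of $G$ passes through $v$, and the cycles of $G':=G-v-u$ are exactly those cycles of $G$ that avoid $u$; hence $G'$ has $2$, $1$, or $0$ cycles according as $u$ lies on $0$, $1$, or $2$ of the original cycles. In the ``two cycles'' sub-case the inductive hypothesis applies; in the ``one cycle'' sub-case Theorem~\ref{H}, applied component-by-component with tree components contributing $\{0,\pm 1\}$, gives $\det(G')\in\{0,\pm 1,\pm 2,\pm 4\}$; in the ``no cycles'' sub-case $G'$ is a forest and $\det(G')\in\{0,\pm 1\}$. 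In every case $\det(G)=-\det(G')$ lies in the target set.

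The main obstacle is the structural classification used in the base case: one must verify that a connected graph of minimum degree at least $2$ with exactly two cycles must have one of the two shapes (a) or (b) above, so that Lemmas~\ref{CC} and~\ref{C-C} exhaust the possibilities. This rests on the cyclomatic identity $m-n+1=2$ combined with the observation that distinct cycles sharing a subpath, or two disjoint cycles joined by two or more internally disjoint paths, always produce a third cycle; once that is in hand, the rest of the proof is the routine pendant-reduction above and a trichotomy on where the pendant's neighbor sits.
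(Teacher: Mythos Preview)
Your proof is correct and follows essentially the same route as the paper. Both arguments use the pendant-reduction identity (the paper cites it as \cite[Theorem~2.11]{CDS}; you write it as $\det(G)=-\det(G-v-u)$) to strip $G$ down to a minimum-degree-$\geq 2$ core, then invoke Lemmas~\ref{CC} and~\ref{C-C} for that core and Theorem~\ref{H} for any one-cycle pieces that appear along the way. The only real difference is presentational: you package the reduction as a formal induction with an explicit trichotomy on how many cycles pass through the deleted neighbour $u$, and you spell out the structural classification (figure-eight versus dumbbell) of the base case, whereas the paper compresses all of this into a single sentence asserting that the reduction yields $\det(G)=\pm\det(F)\cdot\det(H)$ or $\pm\det(F)\cdot\det(H_1)\cdot\det(H_2)$.
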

\begin{proof}
   By using \cite[Theorem 2.11, p. 59]{CDS} on the (possible) vertices of $G$ with degree $1$, we find  that there exist a (possibly empty) forest $F$ and either a connected graph $H$ as  of the form in Lemmas \ref{CC} or \ref{C-C} or   two connected graphs $H_1$ and $H_2$ each of which contains exactly one cycles such that $\det(G)=\pm\det(F)\cdot \det(H)$ or $\det(G)=\pm \det(F) \cdot \det(H_1) \cdot \det(H_2)$, respectively, where $\det(F)=1$ if $F$ is empty. Now Lemmas \ref{CC} and \ref{C-C} and Theorem \ref{H} complete the proof.
\end{proof}
Let us  end the paper by the following problems and questions mainly arising from the data table given in Proposition \ref{t} and some other investigations.\\

\noindent{\bf Problems.}\\
1) \; Describe $\mathcal{DG}_n=\big\{\det(G) \;|\; G\in \mathcal{G}_n\big\}$.\\
2) \; What are the maximum $\alpha_n$ and minimum $\beta_n$? \\
3) \; For a given $n$, what integers can never belong to $\mathcal{D}_n$? \\
4) \; Is it possible to determine graphs $G,H\in \mathcal{G}_n$ such that $\det(G)=\alpha_n$ and $\det(H)=\beta_n$?
Do they have some distinguished properties from other graphs with $n$ vertices? For example,  must they always be connected?\\
5) \; Find relations between $\mathcal{DG}_n$ and $\mathcal{DG}_{n+1}$.\\
6) \; Is it true that $\alpha_n<\alpha_{n+1}$ for all $n>3$?\\
7) \; Is it true that $\beta_{n+1}<\beta_{n}$?\\
8) \; Is it true that $|\beta_n|>\alpha_n$ for all $n>7$? \\
9) \; Does $\delta_0:=\underset{n\rightarrow \infty}{\lim} \frac{|\{G\in\mathcal{G}_n \;|\; \det(G)=0\}|}{|\mathcal{G}_n|}$ exist?
If so, is it true that $\delta_0=\frac{1}{2}$?\\

\end{document}